\newcommand{\mgn}{\overline{\mathcal{M}}_{g,n}}
\newcommand{\mgno}{\mathcal{M}_{g,n}}
\newcommand{\mg}{\overline{\mathcal{M}}_g}
\newcommand{\mgo}{\mathcal{M}_g}
\newcommand{\supp}{\mathrm{supp}}
\newcommand{\Z}{\mathbb{Z}}
\newcommand{\C}{\mathbb{C}}
\newcommand{\M}{\mathcal{M}}
\newcommand{\sieg}{\mathbb{H}}
\newcommand{\A}{\mathcal{A}}
\newcommand{\even}{\mathcal{E}}
\newcommand{\Sp}[1]{\Simp(g,\Z)}
\newcommand{\F}{\mathbb{F}_2}
\newcommand{\e}{\varepsilon}
\DeclareMathOperator{\diag}{diag}
\DeclareMathOperator{\Simp}{Sp}
\DeclareMathOperator{\Id}{Id}
\DeclareMathOperator{\proj}{Proj}
\newtheorem{Theorem}{Theorem}
\newtheorem{Proposition}{Proposition}
\newtheorem{Lemma}{Lemma}
\newtheorem{Corollary}{Corollary}
\newtheorem{Remark}{Remark}
\begin{document}
\title{An affine open covering of $\mathcal{M}_g$ for $g \le 5$}
\author{Claudio Fontanari and Stefano Pascolutti}
\date{}

\begin{abstract}
We prove that the moduli space $\mathcal{M}_g$ of smooth curves of genus $g$ is the union of $g-1$ affine open subsets for every $g$ with $2 \le g \le 5$, as predicted by an 
intriguing conjecture of Eduard Looijenga.
\end{abstract}

\maketitle

\section{Introduction}

A purely algebro-geometric approach to the cohomology of the coarse moduli spaces $\mgno$ and $\mgn$ of smooth (respectively, stable) curves of genus $g$ with $n$ marked points has been recently developed by Enrico Arbarello and Maurizio Cornalba in the two papers \cite{ArbCor:98}, \cite{ArbCor:08}, where the only essential result borrowed from geometric topology is a vanishing theorem due to John Harer. Namely, the fact that $H_k(\mgno)$ vanishes for $k > 4g-4+n$ if $n>0$ and for $k > 4g-5$ if $n=0$ was deduced in \cite{Harer:86} from the 
construction of a $(4g-4+n)$-dimensional spine for $\mgno$ by means of Strebel differentials. On the other hand, it is conceivable that Harer's vanishing is only the tip of an iceberg of deeper geometrical properties (see \cite{HaiLoo:98}, Problem (6.5)).  For instance, a conjecture of Eduard Looijenga says that $\mgo$ is a union of $g-1$ affine open subsets (see \cite{FabLoo:99}, Conjecture~11.3), but (as far as we know) until now there have been no advances in this direction. Notice that Looijenga's conjecture trivially holds for $g=2,3$: indeed, 
it is well-known that $\mathcal{M}_2$ is affine and that non-hyperelliptic curves of genus $3$ can be canonically embedded as quartic plane curves, hence $\mathcal{M}_3 = \mathcal{M}_3 \setminus \{\text{the hyperelliptic locus}\} \cup \mathcal{M}_3 \setminus \{\text{the locus of plane quartics with at least one hyperflex}\}$ is the union of two affine open subsets. 

Here we are going to prove the following 

\begin{Theorem}\label{main} 
For every $g$ with $2 \le g \le 5$ the moduli space $\mgo$ is the union of $g-1$ affine open subsets. 
\end{Theorem}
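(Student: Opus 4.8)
The plan is to dispose of the two outstanding cases $g=4$ and $g=5$, since $g=2,3$ are already handled in the Introduction. Throughout I would reduce affineness to a single projective criterion: if $D$ is an effective ample $\Q$-divisor on the projective variety $\mg$, then $\mg\setminus\supp(D)$ is affine (a suitable multiple of $D$ embeds $\mg$ with $\supp(D)$ as a hyperplane section, and a projective variety minus a hyperplane section is closed in affine space). Consequently it is enough to exhibit $g-1$ effective divisors $D_1,\dots,D_{g-1}$ on $\mg$ such that each $D_i$ is ample, each support satisfies $\mg\setminus\mgo\subseteq\supp(D_i)$, and $\bigcap_i\supp(D_i)=\mg\setminus\mgo$. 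The affine open sets $U_i:=\mg\setminus\supp(D_i)$ then all lie in $\mgo$ by the second condition and cover it by the third, since $\bigcup_i U_i=\mg\setminus\bigcap_i\supp(D_i)$.

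To produce the interior part of each $D_i$ I would search inside $\mgo$ for geometrically meaningful divisors $Z_i$ with empty common intersection and set $D_i:=\overline{Z_i}+\sum_j c_{ij}\delta_j$ with $c_{ij}>0$, so that the boundary is automatically contained in every support. The natural supply of such $Z_i$ comes from theta characteristics: via the Torelli map $\mgo\to\A_g$ and the theory of theta constants on the Siegel space $\sieg_g$, on which $\Simp(2g,\F)$ permutes the even characteristics $\qvet{a}{b}$, each even theta-null cuts out a divisor whose union is the vanishing-theta-null locus. For $g=4$ this is the classical divisor of canonical curves lying on a quadric cone, and for $g=5$ the relevant loci are read off from the fact that the canonical curve is an intersection of quadrics. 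The explicit low-genus geometry is what should let me arrange the third condition — that no smooth curve lies on all the $Z_i$ at once — so that it becomes a finite check governed by the combinatorics of the even characteristics under $\Simp(2g,\F)$ together with Riemann's singularity theorem.

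The crux is ampleness. Writing $[\overline{Z_i}]=a_i\lambda-\sum_j b_{ij}\delta_j$, the tuned class is $[D_i]=a_i\lambda-\sum_j(b_{ij}-c_{ij})\delta_j$, and I must select $c_{ij}\in(0,b_{ij})$ so that this lands in the ample cone of $\mg$; this already forces $b_{ij}>0$ for every boundary index and a favorable slope $a_i/b_{ij}$, so the first concrete task is to compute the classes $[\overline{Z_i}]$ exactly. The genuine obstacle is then to \emph{certify} ampleness: because the ample cones of $\overline{\mathcal{M}}_4$ and $\overline{\mathcal{M}}_5$ are only partially understood, I would rely on Cornalba--Harris-type nefness results (such as the nefness of $11\lambda-\delta$) and on the finitely many extremal F-curve inequalities available in these low genera to verify that each tuned class is strictly positive on every extremal curve. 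I expect the two delicate points to be exactly the empty-intersection condition, which restricts the admissible choice of the $Z_i$, and the ampleness verification against the incompletely known ample cone, which is the true technical heart and the step most likely to require numerical input special to the range $g\le 5$.
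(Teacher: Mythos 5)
Your reduction is, in outline, exactly the paper's: it proves (via the Cornalba--Harris criterion) that if $D$ is effective on $\mgo$ and $[\overline{D}] = a\lambda - \sum_i b_i\delta_i$ with $a, b_i > 0$, then $\mgo\setminus\supp(D)$ is affine, and then covers $\mgo$ by complements of such divisors with empty common intersection. But your assessment of where the difficulty lies is inverted. The ampleness step is not delicate: once all $b_{ij}>0$, take $c_{ij} = b_{ij}-\e$, so the tuned class is $a_i\lambda - \e\delta$, which is ample by Cornalba--Harris as soon as $0 < \e < a_i/11$; no F-curve inequalities and no knowledge of the partially understood ample cones of $\overline{\mathcal{M}}_4$, $\overline{\mathcal{M}}_5$ are needed. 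Nor must you compute the classes $[\overline{Z_i}]$ exactly: the paper gets $b_{ij}>0$ qualitatively, by observing that its divisors come from modular forms, so their closures in the Satake compactification contain the image of the whole boundary of $\mg$ (products of period matrices), which forces every boundary coefficient to be positive.

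The genuine gap is the step you dismiss as ``a finite check governed by the combinatorics of the even characteristics'': producing divisors whose common intersection with $\mgo$ is actually empty. With the divisors you propose --- vanishing loci of even theta-nulls (the quadric-cone divisor in genus $4$ and its analogues) --- this is impossible: a hyperelliptic period matrix of genus $g\geq 3$ has $v(g)\geq 10$ vanishing even theta constants, so every theta-null vanishing locus, and the locus $\{\theta_{m_1}=\theta_{m_2}=0\}$ cut out by any pair, contains the hyperelliptic locus; indeed by Igusa, $\Theta_\text{null}\cap D_1\cap\M_4$ \emph{equals} the positive-dimensional hyperelliptic locus. Hence no collection of such divisors can have empty common intersection, and no combinatorial check of characteristics will repair this. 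The paper's key construction is the form $F_H=\sum_A P(\even\setminus A)^8$, the sum running over the sets $A$ of $v(g)$ characteristics attached by Mumford's theorem to the components of $\hyp_g[2]$: at a hyperelliptic point exactly one summand survives, so $F_H$ is \emph{nonvanishing} on all of $\hyp_g$, while a pigeonhole count shows it still vanishes on every product of periods, so its boundary coefficients are positive. For $g=4$ the cover is then $\Theta_\text{null}$, $D_1$, $D_H$; for $g=5$ one needs a fourth divisor, the trigonal (Schottky-type) divisor $D_T$, together with Accola's lemma (a genus-$5$ curve with a base-point-free $g^1_3$ carrying two half-canonical $g^1_4$'s is hyperelliptic) to identify $\Theta_\text{null}\cap D_1\cap D_T\cap\M_5$ as exactly the hyperelliptic locus, which $D_H$ then avoids. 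These ingredients --- $F_H$, $F_T$, and the Accola input --- are the actual heart of the proof and are absent from your outline.
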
 

We point out that, under the same assumptions on $g$, from the properties of the linear system of quadrics passing 
through the canonical image of a smooth projective genus $g$ curve it follows that $\mgo$ admits an affine stratification 
of depth $g - 2$ (see \cite{FL}). Our approach in the present note relies instead on the theory of modular forms.

\section{Notation and preliminaries}

We work over the complex field $\mathbb{C}$ and we denote by $\sieg_g$ the Siegel upper half-space of symmetric complex matrices 
with positive-definite imaginary part, the so-called period matrices. The action of the symplectic group $\Sp{g}$ on $\sieg_g$ 
is given by
\begin{equation}\label{ABCD}
\begin{pmatrix} 
A&B\\ C&D\end{pmatrix}\circ\tau:= (A\tau+B)(C\tau+D)^{-1},
\end{equation}
where the elements of $\Sp{g}$ are thought as four $g\times g$ blocks and they preserve the symplectic form given in the block form 
as $\left(\begin{smallmatrix} 0& 1\\ -1& 0 \end{smallmatrix}\right)$. We recall that the quotient $\A_g = \sieg_g / \Sp{g}$ has the structure of a quasi-projective variety and it can be viewed as the moduli space of principally polarized abelian varieties. 
Let $\mgo$ and $\mathcal{H}_g$ be the moduli spaces of smooth curves and hyperelliptic curves of genus $g$, respectively. 
It is a very well known fact that
\begin{equation}\label{torelli}
\mathcal{H}_g \hookrightarrow \mgo \hookrightarrow \A_g.
\end{equation}
Moreover, we denote by $\Gamma_g:=\Sp{g}$ the integral symplectic group and
we define the principal congruence subgroup $\Gamma_g[2] \subseteq \Gamma_g$:
\[
	\Gamma_g[2] = \{M \in \Gamma_g \mid M \equiv \Id_{2g} \mod 2\},
\]
which acts on $\sieg_g$ in the same way as $\Gamma_g$ does. The action of $\Gamma_g[2]$ on $\A_g$ induces a level $2$ structure: 
namely, we denote by $\A_g[2] = \sieg_g / \Gamma_g[2]$ the moduli space of principally polarized abelian varieties with a level $2$ structure. Since we have a map $\A_g[2] \to \A_g$, we can define $\mgo[2]$ as the preimage of $\mgo$ and $\mathcal{H}_g[2]$ as the preimage of $\mathcal{H}_g$ in $\A_g[2]$. The analogue of \eqref{torelli} holds for a level $2$ structure:
\[
	\mathcal{H}_g[2] \hookrightarrow \mgo[2] \hookrightarrow \A_g[2].
\]
For a period matrix $\tau\in\sieg_g$, $z\in \C^g$ and $\e,\delta\in \F^g$ (where $\F$ denotes the abelian group $\Z/2 \Z = \{0,1\}$, 
for which we use the additive notation) the associated theta function with characteristic $m=[\e, \delta]$ is
\[
 \theta_m(\tau, z) =\sum_{n\in\Z^g} \exp \left(\pi i ((n+\e/2)'\tau (n+\e/2)+ 2(n+\e/2)'( z+\delta/2)
 \right),
\]
where we denote by $X'$ the transpose of $X$. As a function of $z$, $\theta_m(\tau, z)$ is odd or even depending on whether the scalar product $\e\cdot\delta\in\F$ is equal to 1 or 0, respectively. Theta constants are restrictions of theta functions to $z=0$. We shall write $\theta_m$ for theta constants. It is easy to check that odd theta constants are identically 0, since they are the valuation in 0 of odd functions.

\begin{Remark}\label{split}
Let $\tau \in \sieg_g$ be a period matrix of the form $\tau = \left(\begin{smallmatrix}\tau_1 & 0\\ 0 & \tau_2 \end{smallmatrix}\right)$, with $\tau_1 \in \sieg_{g_1}$ and $\tau_2 \in \sieg_{g_2}$, $g_1 + g_2 = g$. We split $m = [\e, \delta] \in \F^{2g}$ as $m_1 \oplus m_2$, where $m_1 = [\e_1, \delta_1] \in \F^{2g_1}$, $m_2 = [\e_2, \delta_2] \in \F^{2g_2}$ and $m = [\e_1 \e_2, \delta_1 \delta_2]$; then we have
	\[
		\theta_m(\tau) = \theta_{m_1}(\tau_1)\cdot \theta_{m_2}(\tau_2).
	\]
\end{Remark}

For a set of characteristics $M=(m_1, m_2,\dots, m_k)$ we let
\[
	P(M):=\prod_{i=1}^k\theta_{m_i}.
\]
A holomorphic function $f\colon \sieg_g\to\C$ is a modular form of weight $k/2$ with respect to a subgroup $\Gamma\subset\Gamma_g$ of finite index if
\[
	f(\gamma\circ\tau)=\det(C\tau+D)^{k/2}f(\tau) \qquad \forall\gamma\in\Gamma,\forall\tau\in\sieg_g,
\]
where $C$ and $D$ are as in (\ref{ABCD}), and if additionally $f$ is holomorphic at all cusps for $g=1$. 
We denote by $[\Gamma, k/2]$ the space of such functions, which turns out to be a finite dimensional vector space. 
Moreover,
\[
		 A(\Gamma):=\bigoplus_{k=0} ^{\infty}[\Gamma, k/2]
\]
is a finitely generated ring. The associated projective variety $\A_g^* = \proj(A(\Gamma))$ is the so called \emph{Satake compactification} of $\sieg_g / \Gamma$.

Theta constants are modular forms of weight $k/2$ with respect to the subgroup $\Gamma(4,8) \subseteq \Gamma_g[2]$ of matrices $M = \left(\begin{smallmatrix}A&B\\C&D\end{smallmatrix}\right) \equiv 1 \mod 4$ and $\diag(AB')\equiv \diag(CD')\equiv 0 \mod 8$ (see \cite{Igu:64}). 
For further use, we denote by $\Gamma_g (1,2)$ the subgroup of $\Gamma_g$ defined by 
$\diag(AB')\equiv \diag(CD')\equiv 0 \mod 2$; note that theta constants will have an automorphy factor with respect to $\Gamma_g(1,2)$, while their eighth power is modular form.

We need the following rough formulation of a classical result by Mumford about the hyperelliptic locus (for a more precise statement we refer to \cite{Mum:84}).
\begin{Theorem}\label{mumford}
	Let $\tau \in \sieg_g$. Then $\tau$ is the period matrix of a smooth hyperelliptic curve if and only if exactly
	\[
		v(g) = 2^{g-1}(2^g+1) - \frac{1}{2}\binom{2g+2}{g+1}
	\]
suitable even theta constants vanish at the point $\tau$. Each suitable sequence of theta constants defines an irreducible component of $\mathcal{H}_g[2]$. 
The full modular group acts transitively on the components.
\end{Theorem}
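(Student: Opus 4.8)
The plan is to reduce everything to the classical combinatorial dictionary between theta characteristics on a hyperelliptic curve and subsets of its branch points. First I would fix a smooth hyperelliptic curve $C$ of genus $g$ with its set $B$ of $2g+2$ branch points, and introduce the $\F$-vector space $\mathcal{P}$ of even-cardinality subsets of $B$ taken modulo complementation $S \sim B\setminus S$. A direct count gives $\dim_{\F}\mathcal{P}=2g$, and the pairing $\langle S,T\rangle = |S\cap T|\bmod 2$ endows $\mathcal{P}$ with a nondegenerate symplectic form. The Weierstrass points $w_i$ furnish an isomorphism of symplectic $\F$-spaces $\mathcal{P}\cong J(C)[2]$, sending $S$ to the $2$-torsion class of $\sum_{i\in S}w_i - |S|\,w_\infty$ (which is $2$-torsion because $2w_i \sim 2w_\infty \sim g^1_2$). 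Since the theta characteristics form a torsor under $J(C)[2]$, fixing a reference subset $U\subseteq B$ with $|U|=g+1$ identifies them with subsets of $B$ modulo complement, and a standard computation expresses the parity of the characteristic attached to $T$ as an explicit congruence on the cardinality $|T+U|$ of the symmetric difference. A level-$2$ structure is precisely an identification of $B$ with a fixed abstract model, so that each theta characteristic acquires a well-defined label.

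For the forward implication I would invoke Riemann's singularity theorem: an even theta constant $\theta_m(\tau)$ vanishes exactly when the degree-$(g-1)$ class attached to $m$ is effective, in which case $h^0\ge 2$. On a hyperelliptic curve effectivity becomes, through the dictionary above, a purely combinatorial condition on the labelling subset, and the even characteristics that are \emph{not} effective are exactly those coming from the balanced partitions $B = S\sqcup(B\setminus S)$ with $|S|=|B\setminus S|=g+1$. Thomae's formula makes this quantitative: for such a balanced partition the corresponding thetanull equals, up to a nonzero factor, a product of differences $(a_i-a_j)$ of branch points, hence is nonzero precisely because the branch points are distinct. The number of these non-vanishing even thetanulls is the number of unordered balanced partitions, namely $\tfrac12\binom{2g+2}{g+1}$, and subtracting from the total $2^{g-1}(2^g+1)$ of even characteristics leaves exactly $v(g)$ vanishing ones. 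This both produces the value $v(g)$ and shows that the vanishing set is prescribed by a \emph{suitable sequence}, namely the complement within the even characteristics of the balanced partitions.

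The hard part is the converse. Given $\tau$ at which exactly the prescribed even thetanulls vanish, I would try to reconstruct a hyperelliptic curve directly: the non-vanishing balanced thetanulls satisfy Thomae-type relations whose ratios recover the cross-ratios, hence the projective configuration of $2g+2$ branch points, and therefore a hyperelliptic curve $C'$ with $\tau'\in\hyp_g$; one then checks, by comparing the full system of thetanulls, that $\tau$ and $\tau'$ lie in the same $\Gamma_g[2]$-orbit. The delicate point — and the step I expect to be the main obstacle — is to guarantee that such a reconstruction is consistent, equivalently that no non-hyperelliptic period matrix can exhibit exactly the prescribed vanishing pattern. Ruling out these spurious loci requires control of the excess vanishing of theta constants and is naturally organised by induction on $g$: degenerating $\tau$ to a block-diagonal $\diag(\tau_1,\tau_2)$ factorizes each theta constant as in Remark~\ref{split}, so a vanishing pattern in genus $g$ decomposes into patterns in lower genus, where the statement is already known.

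Finally, for the structural claims I would use that the coarse hyperelliptic locus $\hyp_g$ is irreducible, being dominated by the irreducible space of $2g+2$ points on $\bP^1$ modulo $\mathrm{PGL}_2$. Since $\A_g[2]\to\A_g$ is the quotient by $\Simp(2g,\F)\cong\Gamma_g/\Gamma_g[2]$, the preimage $\hyp_g[2]$ carries an action of this group with quotient the irreducible variety $\hyp_g$; hence $\Simp(2g,\F)$, and a fortiori the full modular group $\Gamma_g$, permutes the connected components of $\hyp_g[2]$ transitively. Each such component is irreducible, and the dictionary of the first two paragraphs attaches to it a well-defined suitable sequence, yielding the asserted bijection between components and suitable sequences and completing the proof.
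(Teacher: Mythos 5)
The paper does not actually prove this statement: it is quoted as a ``rough formulation of a classical result by Mumford,'' with \cite{Mum:84} (and \cite{Tsu:90} for the component structure) as references, so the only meaningful comparison is with the classical argument. Your forward direction is sound and standard: the dictionary between even-cardinality subsets of the $2g+2$ branch points modulo complementation and $J(C)[2]$, the identification of theta characteristics with subsets, and Riemann's vanishing theorem (an even thetanull vanishes iff $h^0 \ge 2$, and on a hyperelliptic curve $h^0 = 0$ exactly for the balanced partitions $|S| = g+1$) correctly yield that exactly $2^{g-1}(2^g+1)-\frac{1}{2}\binom{2g+2}{g+1} = v(g)$ even thetanulls vanish; Thomae's formula is a quantitative bonus, not logically needed there. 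The structural claims are also fine: the vanishing set is constant on each component (since excess vanishing at a special point would contradict ``exactly $v(g)$''), and transitivity of $\Gamma_g/\Gamma_g[2] \cong \Simp(2g,\F)$ on the components follows from irreducibility of $\hyp_g$.

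The genuine gap is the converse, and your own fallback for it would fail. The statement concerns a single, arbitrary $\tau \in \sieg_g$ --- not assumed to be a Jacobian --- at which the prescribed $v(g)$ thetanulls vanish and the remaining ones do not. Your proposed induction by ``degenerating $\tau$ to a block-diagonal $\diag(\tau_1,\tau_2)$'' is not an available move: $\tau$ is a fixed point of $\sieg_g$, there is no family to degenerate, and Remark~\ref{split} only computes thetanulls at matrices that already are block-diagonal; pointwise vanishing data at $\tau$ carries no information about other points of $\sieg_g$. Your primary route (reconstruct branch points from Thomae-type ratios, obtain a hyperelliptic $\tau'$, then ``compare the full system of thetanulls'' to conclude $\tau \equiv \tau' \bmod \Gamma_g[2]$) presupposes two nontrivial facts: first, that the theta values at $\tau$ satisfy all the compatibilities required for the Thomae relations to be realized by an actual configuration of $2g+2$ distinct points on $\bP^1$; second, that the full vector of thetanulls separates $\Gamma_g[2]$-orbits, an injectivity statement (essentially for the map given by fourth powers of theta constants on $\A_g[2]$) which is delicate and cannot be invoked off the shelf. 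Ruling out every non-hyperelliptic ppav --- Jacobian or not --- exhibiting the exact vanishing pattern is precisely the deep content of Mumford's theorem, and it is the part your proposal leaves unproved.
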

Moreover, we recall the structure of $\mathcal{H}_g[2]$ (see \cite{Tsu:90}).
\begin{Remark}\label{Tsu}
For $g \leq 2$, $\mathcal{H}_g[2]$ is an irreducible variety. For $g \geq 3$, $\mathcal{H}_g[2]$ breaks into disjoint irreducible components isomorphic to each other. The number of components of the hyperelliptic locus is
	\[
		2^{g(2g+1)}\prod_{k=1}^g \frac{1-2^{-2k}}{(2g+2)!}.
	\]
Hence, we have 36 components for $g = 3$, each defined by the vanishing of a single even theta constant, 13056 components for $g=4$, each defined by the vanishing of 10 suitable even theta constants, 51806208 components for $g=5$, each defined by the vanishing of 66 suitable even theta constants.
\end{Remark}

We introduce four special modular forms which will give us the affine open covering we are looking for. 
We denote by $\even$ the subset of $\F^{2g}$ of even characteristics; it is easy to show that $|\even| = 2^{g-1}(2^g + 1)$. Let
\begin{align}
	F_\text{null} &= P(\even) = \prod_{m \in \even} \theta_m;\label{thetanull}\\
	F_1 &= \sum_{m \in \even} \left(P(\even)/\theta_m\right)^8 ;\label{una}\\
	F_H &= \sum_{A \subseteq \F^{2g}} (P(\even\setminus A))^8;\label{hyperelliptic}\\
	F_T &= 2^g \sum_{m \in \even} \theta_m^{16} - \left(\sum_{m \in \even} \theta_m^8 \right)^2\label{trigonal}.
\end{align}
In \eqref{hyperelliptic}, $A$ varies among all suitable sets of $v(g)$ characteristics 
corresponding to the irreducible components of $\mathcal{H}_g[2]$ as in Theorem \ref{mumford}. 
We need to take the eighth power of the theta constants in order to obtain the modularity 
of the above forms with respect to the modular group.

\begin{Remark}
The modular form $F_\text{null}$ has weight $2^{g-2}(2^g+1)$. Its vanishing locus is a divisor $\Theta_\text{null}$ on $\A_g$.
\end{Remark}

\begin{Remark}
The modular form $F_1$ has weight $2^{g+1}(2^g+1)-4$. It defines a divisor $D_1$ on $\A_g$.
\end{Remark}

\begin{Remark}
The modular form $F_H$ has weight $2\binom{2g+2}{g+1}$ and it coincides with $F_\text{null}^8$ for $g = 2$, 
since no theta constant vanishes on the hyperelliptic locus. In the case $g = 3$, the modular 
form $F_H$ coincides with $F_1$ since every component of the hyperelliptic locus is characterized by the vanishing 
of a single theta constant. Let $D_H$ be the divisor defined by $\{F_H = 0\}$ on $\A_g$.
\end{Remark}

\begin{Remark}\label{trigonal_locus}
The modular form $F_T$ has weight $8$ and it is not identically 0 only for $g \geq 4$. Indeed, for $g = 2,3$ it vanishes 
identically on $\sieg_g$, while for $g = 4$ it vanishes on the preimage of $\M_4$ in $\sieg_4$. When $g \geq 4$, it defines a divisor $D_T$ on $\A_g$, which coincides with the closure of $\M_4$ when $g = 4$ and with the closure of the trigonal locus when $g = 5$ (see \cite{GruSal:09}).
\end{Remark}

In order to handle the divisors defined by modular forms on $\A_g^*$, we will make use of the following fact.

\begin{Lemma}\label{ampleness}
For $g \geq 3$ all modular forms define ample divisors on $\A_g^*$.
\end{Lemma}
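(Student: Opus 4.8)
The plan is to read off the result from the very definition of the Satake compactification as a $\proj$. Writing the ring of modular forms $A(\Gamma)=\bigoplus_{k=0}^{\infty}[\Gamma,k/2]$ with its grading by $k$ (so that a form of weight $k/2$ is homogeneous of degree $k$), we have $\A_g^*=\proj(A(\Gamma))$ by construction, and $\proj$ of any finitely generated graded $\C$-algebra carries a tautological ample sheaf $\mathcal{O}(1)$. The whole point is then that a modular form $f$ of weight $k/2$ is nothing but a homogeneous element of positive degree $k$, hence a global section of $\mathcal{O}(k)$; its vanishing locus $\{f=0\}$ is an effective divisor in the linear system $|\mathcal{O}(k)|$, so its class is $k$ times that of $\mathcal{O}(1)$, which is ample precisely because $k>0$.

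First I would make the ampleness of $\mathcal{O}(1)$ precise. Since $A(\Gamma)$ is finitely generated over $A(\Gamma)_0=\C$, there is an integer $e>0$ for which the $e$-th Veronese subring $A(\Gamma)^{(e)}$ is generated in degree $1$; this realizes $\A_g^*$ as a closed subvariety of a projective space with $\mathcal{O}(e)$ a genuine very ample line bundle. Consequently $\mathcal{O}(1)$ is an ample $\Q$-line bundle, and the automorphic (Hodge) class $\lambda$, being a fixed positive power of $\mathcal{O}(1)$, is $\Q$-ample as well; this is of course the classical Baily--Borel fact that automorphic forms of high weight polarize the Satake compactification. Next I would record that the divisor cut out by a weight-$k/2$ form is $\Q$-linearly equivalent to $k$ times the ample class $[\mathcal{O}(1)]$, and deduce ampleness directly from $k>0$, independently of the Picard rank of $\A_g^*$.

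The hard part will not be the geometry but the bookkeeping imposed by the theta characteristics. The individual theta constants are modular only with respect to $\Gamma(4,8)$ and merely carry an automorphy factor on $\Gamma_g(1,2)$, so they are not on the nose elements of the graded ring attached to the full modular group; as already stressed in the text, one must pass to eighth powers, and only the symmetric combinations $F_\text{null},F_1,F_H,F_T$ yield honest homogeneous elements of $A(\Gamma_g)$. I would therefore state the argument for these genuine modular forms and then transfer it to the associated divisors $\Theta_\text{null},D_1,D_H,D_T$. A second delicate point is that $\A_g^*$ is singular along its boundary, so $\mathcal{O}(1)$, $\lambda$, and the divisor classes above are only $\Q$-Cartier; ampleness must accordingly be understood in the $\Q$-sense, which is exactly what the Veronese argument of the previous paragraph delivers. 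Finally, the restriction $g\geq 3$ is just the range relevant to the covering construction, the case $g=2$ being disposed of separately through the affineness of $\M_2$.
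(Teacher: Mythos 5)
Your proof is correct, but it takes a genuinely different route from the paper's. The paper argues via the divisor class group: for $g \geq 3$ the group of Weil divisors of $\A_g^*$ modulo principal divisors is isomorphic to $\Z$, so a suitable multiple of any effective divisor is very ample, and Lemma~2.1 of \cite{Db} then upgrades this to ampleness of the divisor itself; since divisors of modular forms are effective, the lemma follows. You never touch the class group: you read ampleness directly off the $\proj$ construction, identifying a weight-$k/2$ form $f$ with a degree-$k$ homogeneous element of $A(\Gamma)$, passing to a Veronese subring $A(\Gamma)^{(e)}$ generated in degree one so that $\mathcal{O}(e)$ becomes an honest very ample line bundle, and noting that (by additivity of valuations on the normal variety $\A_g^*$) the Weil divisor $D_f$ satisfies $e D_f = D_{f^e} \in \vert \mathcal{O}(e)^{\otimes k}\vert$, whence $D_f$ is an ample $\Q$-Cartier divisor. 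Two remarks. First, your opening assertion that $\proj$ of an arbitrary finitely generated graded algebra ``carries a tautological ample sheaf $\mathcal{O}(1)$'' is not literally correct, since $\mathcal{O}(1)$ need not even be invertible when the ring is not generated in degree one; but your Veronese paragraph repairs exactly this defect, so the argument stands. Second, the trade-off: the paper's proof is shorter but rests on two nontrivial external inputs (the computation of the class group of $\A_g^*$, which is precisely what forces the hypothesis $g \geq 3$, and Debarre's lemma), whereas yours is essentially self-contained, works for every $g \geq 1$ and for the Satake compactification attached to any finite-index subgroup $\Gamma \subseteq \Gamma_g$, and yields finer information: the divisor of a weight-$k/2$ form is $\Q$-linearly equivalent to $k$ times the ample tautological class, not merely ample.
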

\begin{proof}
For $g \geq 3 $ the group of the Weil divisors of $\A_g^{*}$ modulo principal divisors is isomorphic to $\Z$, hence 
a suitable multiple of any effective divisor $D$ in $\A_g^*$ is very ample. By \cite{Db}, Lemma 2.1, $D$ is ample. 
Since every divisor defined by a modular form is effective, our claim follows.
\end{proof}

Let $D$ be a divisor defined by a modular form. Since $\M_g$ contains complete curves when $g \geq 3$ (indeed, the Satake compactification is projective and has boundary of codimension $2$ for $g \ge 3$), we have that $D \cap \M_g \neq \emptyset$. Hence each of the previously described divisors either contains $\M_g$ or defines a divisor in $\M_g$. In the latter case, we use the same notation for 
the induced divisor inside $\M_g$.

Next, we prove a fundamental result about $F_H$ (for further details, see also 
\cite{Salvati}, Theorem 2).

\begin{Lemma}\label{hyperavoid}
The modular form $F_H$ never vanishes on $\mathcal{H}_g$.
\end{Lemma}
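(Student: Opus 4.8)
The plan is to evaluate $F_H$ directly at an arbitrary point of the hyperelliptic locus and to show that in the defining sum \eqref{hyperelliptic} exactly one summand survives, that summand being manifestly nonzero, so that no cancellation can occur. The whole argument is a bookkeeping consequence of the \emph{exactness} in Mumford's Theorem~\ref{mumford}, together with the fact that all suitable sets have the same cardinality $v(g)$.

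First I would fix a period matrix $\tau \in \sieg_g$ lying over a point of $\hyp_g$. By Theorem~\ref{mumford}, the hyperellipticity of $\tau$ means that \emph{exactly} $v(g)$ even theta constants vanish at $\tau$, and the set $A_0 \subseteq \even$ of their characteristics is one of the suitable sets cutting out an irreducible component of $\hyp_g[2]$; in particular $A_0$ is one of the sets $A$ ranged over in \eqref{hyperelliptic}. The essential input is the word ``exactly'': for every $m \in \even \setminus A_0$ we have $\theta_m(\tau) \neq 0$. (In the degenerate case $g = 2$ one has $v(2) = 0$ and $A_0 = \emptyset$, and the sum reduces to the single term $F_\text{null}^8$, so the argument below applies uniformly.)

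Next I would inspect the individual summands $\bigl(P(\even \setminus A)(\tau)\bigr)^8$. For $A = A_0$ the factor $P(\even \setminus A_0)(\tau) = \prod_{m \in \even \setminus A_0} \theta_m(\tau)$ is a product of nonvanishing theta constants, hence nonzero. For any other suitable set $A \neq A_0$, since $|A| = |A_0| = v(g)$ we cannot have $A_0 \subseteq A$, so there exists a characteristic $m \in A_0 \setminus A$; this $m$ lies in $\even \setminus A$, so $\theta_m$ occurs as a factor of $P(\even \setminus A)$, and since $m \in A_0$ we have $\theta_m(\tau) = 0$, forcing that entire summand to vanish.

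Consequently $F_H(\tau) = \bigl(P(\even \setminus A_0)(\tau)\bigr)^8 \neq 0$: every term of the sum except the one indexed by $A_0$ is zero at $\tau$, so the surviving term cannot be cancelled. As $\tau$ was an arbitrary preimage of a point of $\hyp_g$, this proves that $F_H$ never vanishes on $\hyp_g$. I do not expect a serious obstacle here; the only points requiring care are the appeal to the ``exactly $v(g)$, and no more'' part of Mumford's theorem — which both guarantees the nonvanishing of the complementary theta constants and ensures that distinct components correspond to distinct suitable sets — and the elementary cardinality observation $|A| = |A_0|$, which is precisely what makes every competing summand collapse.
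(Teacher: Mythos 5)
Your proof is correct and follows essentially the same route as the paper: both identify the unique suitable set $A_0$ of vanishing theta constants via Theorem~\ref{mumford}, observe that every other summand of $F_H$ contains some $\theta_m$ with $m \in A_0$ and hence vanishes, and conclude $F_H(\tau) = \bigl(P(\even \setminus A_0)(\tau)\bigr)^8 \neq 0$. Your write-up is merely more explicit about the two points the paper leaves implicit — the cardinality argument ruling out $A_0 \subseteq A$ and the use of ``exactly'' to guarantee nonvanishing of the complementary theta constants.
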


\begin{proof}
Let $\tau$ be the period matrix of a hyperelliptic curve. By Theorem~\ref{mumford}, there is a suitable set $A$ of $v(g)$ theta constants which vanish at $\tau$. 
Hence all terms but $P(\even \setminus A)^8$ contain at least one of the vanishing theta constants and 
$F_H(\tau) = P(\even \setminus A)^8(\tau) \neq 0$. It follows that $F_H$ never vanishes on $\mathcal{H}_g$.
\end{proof}

\section{The main result}

Now we can exhibit an explicit open covering of $\mgo$ for every $2 \leq g \leq 5$. 
We have already recalled the description for $g = 2,3$ in the Introduction, hence here we focus on the cases 
$g = 4$ and $g = 5$. Namely, we are going to prove that
\begin{align}
	\M_4 &= \M_4 \setminus \Theta_\text{null} \cup \M_4 \setminus D_1 \cup \M_4 \setminus D_H, \label{M4},\\
	\M_5 &= \M_5 \setminus \Theta_\text{null} \cup \M_5 \setminus D_1 \cup \M_5 \setminus D_H \cup \M_5 \setminus D_T. \label{M5}
\end{align}

Our proof relies on two key ideas. The first one is a straightforward application of the Cornalba-Harris ampleness criterion. 
(see \cite{CorHar:88}).

\begin{Proposition} \label{affine}
Let $D$ be an effective divisor on $\mathcal{M}_g$ and let $\overline{D}$ 
be its closure in the Deligne-Mumford compactification $\overline{\mathcal{M}}_g$. 
If $[\overline{D}]= a \lambda - \sum b_i \delta_i$ with $a, b_i > 0$, then 
$\mgo \setminus \supp(D)$ is an affine open subset. 
\end{Proposition}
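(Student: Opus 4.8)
The plan is to exhibit $\mgo \setminus \supp(D)$ as the complement of the support of an \emph{ample} effective divisor on the projective variety $\mg$; once this is achieved, affineness is automatic, since the complement of the support of an ample effective divisor in a projective variety is an affine open subset. The starting observation is purely set-theoretic: writing $\partial = \bigcup_i \delta_i$ for the boundary and noting that $\supp(\overline{D}) \cap \mgo = \supp(D)$, one has
\[
	\mgo \setminus \supp(D) = \mg \setminus \left(\supp(\overline{D}) \cup \partial\right),
\]
so it suffices to produce an effective divisor on $\mg$ with ample class whose support is exactly $\supp(\overline{D}) \cup \partial$.

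First I would set $E = \overline{D} + \sum_i c_i \delta_i$, with coefficients to be chosen in the range $0 < c_i < b_i$. For any such choice $E$ is effective, and since every $\delta_i$ enters with strictly positive coefficient one has $\supp(E) = \supp(\overline{D}) \cup \partial$, as desired. Its class is
\[
	[E] = a\lambda - \sum_i b_i \delta_i + \sum_i c_i \delta_i = a\lambda - \sum_i (b_i - c_i)\delta_i,
\]
with every $b_i - c_i > 0$. The decisive input is now the Cornalba--Harris ampleness criterion \cite{CorHar:88}: the class $a\lambda - \sum_i \beta_i \delta_i$ is ample on $\mg$ as soon as the $\beta_i$ are positive and sufficiently small relative to $a$. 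Geometrically this reflects the fact that $\lambda$ is nef and big (being pulled back from the Satake compactification) but fails to be ample precisely along the curves contracted by the Satake map, which lie in the boundary and meet it negatively; subtracting a small positive multiple of the boundary from $a\lambda$ therefore pushes the class into the interior of the nef cone. Hence, choosing the $c_i$ close enough to $b_i$ that the $\beta_i = b_i - c_i$ land in this ample region, the divisor $E$ acquires ample class.

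With such an $E$ in hand, a suitable positive multiple $mE$ clears denominators and is a genuine ample effective divisor with $\supp(mE) = \supp(E)$; consequently $\mg \setminus \supp(E)$ is affine, and by the displayed identity this is exactly $\mgo \setminus \supp(D)$. The main obstacle — indeed essentially the only non-formal ingredient — is having the Cornalba--Harris criterion available in precisely this shape, namely the ampleness of $a\lambda - \sum_i \beta_i \delta_i$ for arbitrarily small positive $\beta_i$. The remaining points (the effectivity and the exact support of $E$, the passage from a $\Q$-ample class to an honest ample divisor, and the fact that we may argue on the coarse projective variety $\mg$ with its quotient singularities rather than on the stack) are routine bookkeeping.
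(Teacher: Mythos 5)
Your overall construction coincides with the paper's: enlarge $\overline{D}$ by boundary components to get an effective divisor $E$ with $\supp(E)=\supp(\overline{D})\cup\partial$ and ample class, then use that the complement of an ample effective divisor in a projective variety is affine. The genuine gap is in what you call the decisive input: it is \emph{false} that $a\lambda-\sum_i\beta_i\delta_i$ is ample on $\mg$ as soon as the $\beta_i$ are positive and sufficiently small relative to $a$. The Cornalba--Harris criterion is an \emph{equal-coefficient} statement: $a\lambda-b\delta$ is ample if and only if $a>11b>0$. For unequal coefficients, smallness cannot help, because $\mg$ contains curves of $\lambda$-degree zero on which the boundary classes appear with both signs, so the relevant inequalities are homogeneous in the $\beta_i$ and are unaffected by rescaling them. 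Concretely, for $g=4$ let $C\subset\overline{\mathcal{M}}_4$ be the curve obtained by attaching four fixed elliptic tails to a moving $4$-pointed rational spine (a copy of $\overline{\mathcal{M}}_{0,4}$). Then $\lambda\cdot C=0$, $\delta_0\cdot C=0$, $\delta_1\cdot C=-4$, $\delta_2\cdot C=3$, hence
\[
\Bigl(a\lambda-\textstyle\sum_i\beta_i\delta_i\Bigr)\cdot C \;=\; 4\beta_1-3\beta_2,
\]
which is negative whenever $\beta_2>\tfrac{4}{3}\beta_1$, no matter how small both are; such a class is not nef, a fortiori not ample. (For $g=5$ the analogous curve with tails of genera $1,1,1,2$ gives the constraint $3\beta_1\ge 2\beta_2$.) So your $E$, with the $c_i$ chosen arbitrarily in $(0,b_i)$, need not have ample class, and the phrase ``land in this ample region'' presupposes a description of that region which Cornalba--Harris does not provide. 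Your heuristic is also misleading on exactly this point: Satake-contracted curves meet the \emph{total} boundary $\delta$ negatively, but, as the curve above shows, they can meet an unequal positive combination $\sum\beta_i\delta_i$ positively.

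The repair is immediate and is precisely what the paper does: instead of arbitrary $c_i$, take $c_i=b_i-\e$ for a \emph{single} $\e$ with $0<\e<\min\bigl(\min_i b_i,\,a/11\bigr)$. Then $E=\overline{D}+\sum_i(b_i-\e)\delta_i$ is still effective with support $\supp(\overline{D})\cup\partial$, but now its class is $a\lambda-\e\delta$ with equal boundary coefficients, and the genuine Cornalba--Harris criterion applies since $a>11\e$. With that one change of choice, the rest of your argument (the set-theoretic identity, passing to a multiple to clear denominators, and working on the coarse projective variety $\mg$) goes through as written.
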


\begin{proof}
Just notice that $E = a \lambda - \sum b_i \delta_i + \sum (b_i - \varepsilon) \delta_i = a \lambda - \varepsilon \delta$ with $\varepsilon > 0$ small enough is an effective divisor on $\overline{\mathcal{M}}_g$ such that $\overline{\mathcal{M}}_g \setminus 
\supp(E) = \mathcal{M}_g \setminus \supp(D)$ and $E$ is ample by Cornalba-Harris ampleness criterion \cite{CorHar:88}.
\end{proof}

Proposition~\ref{affine} yields the following useful result.

\begin{Corollary}\label{affcor}
Let $D$ be an effective divisor on $\mgo$, let $\tilde{D}$ be its closure in the Satake compactification $\A_g^{*}$ and $\overline{D}$ be its closure in the Deligne-Mumford compactification $\mg$.
If $\tilde{D}$ contains the product of periods of smooth curves and periods of nodal curves, then $\overline{D} = a \lambda - \sum b_i \delta_i$ with $a, b_i > 0$. In particular, $\mgo \setminus \supp(D)$ is affine.
\end{Corollary}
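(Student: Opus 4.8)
The plan is to compute $[\overline{D}]$ in $\Pic(\mg)_\Q = \Q\lambda \oplus \bigoplus_i \Q\delta_i$ by pulling back the modular form defining $D$ along the extended Torelli map $\overline{t}\colon \mg \to \A_g^*$, and then to read off the signs of the boundary coefficients from the images of the boundary divisors. First I would use that the Torelli map extends to a morphism $\overline{t}$ into the Satake compactification and that $\overline{t}^* L = \lambda$, where $L$ is the line bundle on $\A_g^*$ whose sections are the modular forms. Since the Satake boundary has codimension $g \geq 2$, a weight-$w$ form $f$ defining $D$ has $[\tilde{D}] = wL$ with no divisorial boundary term, so $\overline{t}^* f$ is a section of $\lambda^{\otimes w}$ whose effective zero divisor is supported on $\overline{D}$ and possibly on some of the $\delta_i$. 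Comparing classes then gives
\[
	[\overline{D}] = a\lambda - \sum_i b_i \delta_i, \qquad a = w > 0, \quad b_i \geq 0,
\]
with $b_i > 0$ exactly when $\overline{t}^* f$ vanishes identically along $\delta_i$, that is, when $\overline{t}(\delta_i) \subseteq \tilde{D}$.

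It then remains to identify the images $\overline{t}(\delta_i)$ and to feed in the hypothesis. For $i \geq 1$ a generic point of $\delta_i$ is a curve with a separating node, whose Jacobian is the product of the Jacobians of its two smooth components of genera $i$ and $g-i$; thus $\overline{t}(\delta_i)$ lies in the locus of products of periods of smooth curves, and $b_i > 0$ follows from the assumption that $\tilde{D}$ contains such products. For $i = 0$ a generic point of $\delta_0$ is an irreducible one-nodal curve, whose generalized Jacobian is semiabelian and whose image under $\overline{t}$ is the Satake limit lying in $\A_{g-1}$ --- precisely a period of a nodal curve in the sense of the statement --- so $b_0 > 0$ follows from the other containment. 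With every $b_i > 0$, the affineness of $\mgo \setminus \supp(D)$ is immediate from Proposition~\ref{affine}.

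The hard part will be the first step: making rigorous that $\overline{t}$ is a genuine morphism to $\A_g^*$ and that $\overline{t}^* L = \lambda$ exactly, so that the $\lambda$-coefficient is the full weight $w$ and nothing leaks out along the codimension-$\geq 2$ Satake boundary, together with the correct bookkeeping of the two qualitatively different behaviours of the boundary --- $\delta_i$ with $i \geq 1$ landing in the product locus inside $\A_g$, and $\delta_0$ landing in the Satake boundary $\A_{g-1}$. Once these geometric facts are in place, the positivity of each $b_i$ reduces to a single containment of loci in the vanishing divisor of $f$, which is exactly what is assumed; in the applications to $F_{\text{null}}, F_1, F_H, F_T$ this containment is checked using Remark~\ref{split}, which writes a theta constant on a product as the product of the theta constants on the factors.
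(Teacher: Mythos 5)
Your proposal is correct and follows essentially the same route as the paper's proof: both use the extension of the Torelli map from $\mg$ to the Satake compactification, observe that the boundary divisors $\delta_i$ map to products of periods of smooth curves (for $i\geq 1$) and to periods of nodal curves in the Satake boundary (for $i=0$), so the hypothesis forces each $b_i>0$, and then conclude by Proposition~\ref{affine}. In fact your write-up is more explicit than the paper's two-line argument, notably in justifying $a>0$ by identifying $a$ with the weight of the defining modular form via $\overline{t}^{*}L=\lambda$, a point the paper leaves implicit.
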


\begin{proof}
There is a standard map from the Deligne-Mumford compactification $\mg$ to the Satake compactification $\mgo^*$ (i.e. the closure of $\M_g$ in the Satake compactification) which takes boundary divisors of $\mg$ to products of periods of smooth curves and periods of nodal curves. Thus in the notation of Proposition \ref{affine} we have $b_i > 0$, since $\tilde{D}$ contains the image of the whole boundary of $\mg$, therefore any function which vanishes on $\overline{D}$ vanishes on every $\delta_i$ with positive multiplicity, and now our claim follows from Proposition~\ref{affine}.
\end{proof}
\begin{Remark}
We could avoid the Cornalba-Harris ampleness criterion by observing that a modular form always induces an ample divisor $\tilde{D}$ on $\M_g^*$ and it defines a divisor $D$ on $\M_g$. Obviously $\M_g^*\setminus \tilde{D}$ is affine. Now, whenever $\tilde{D} = D \cup (\M_g^*\setminus \M_g)$ we have that $\M_g \setminus D = \M_g^*\setminus \tilde{D}$ is affine.
\end{Remark}

Next we present a nice criterion to check whether $F_H$ vanishes on $\tau$.
\begin{Lemma}\label{crit}
Let $\tau \in \sieg_g$. If there exist more than $v(g)$ even theta constants vanishing on $\tau$, then $F_H(\tau) = 0$.
\end{Lemma}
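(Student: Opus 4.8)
The plan is to reduce the claim to an elementary counting argument on the summands defining $F_H$. First I would record the hypothesis in a convenient form by introducing the set of vanishing even characteristics
\[
	Z = \{m \in \even : \theta_m(\tau) = 0\},
\]
so that the assumption becomes simply $|Z| > v(g)$.

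Next I would examine a single term of the sum. Recall that $F_H = \sum_A (P(\even \setminus A))^8$, where $A$ ranges over the suitable sets of $v(g)$ even characteristics attached to the components of $\mathcal{H}_g[2]$ by Theorem~\ref{mumford}, and that $P(\even \setminus A) = \prod_{m \in \even \setminus A} \theta_m$. Hence the summand $(P(\even \setminus A))^8$ is nonzero at $\tau$ if and only if none of the theta constants indexed by $\even \setminus A$ vanishes there, that is, if and only if $Z \cap (\even \setminus A) = \emptyset$, or equivalently $Z \subseteq A$.

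Now I would invoke the cardinality hypothesis. Since $|A| = v(g) < |Z|$, no admissible set $A$ can contain $Z$; thus for every such $A$ there is at least one characteristic $m \in Z \cap (\even \setminus A)$, for which $\theta_m^8(\tau) = 0$, and therefore the whole term $(P(\even \setminus A))^8$ vanishes at $\tau$. As each summand of $F_H$ vanishes, we conclude $F_H(\tau) = 0$.

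I do not anticipate any real obstacle here: the core of the argument is a pigeonhole observation, and the only point demanding care is the bookkeeping of which characteristics enter each summand, namely that $P(\even \setminus A)$ is the product over \emph{all} even characteristics outside $A$. This is exactly what forces every vanishing characteristic to be absorbed into the size-$v(g)$ set $A$ for a term to survive, which the inequality $|Z| > v(g)$ rules out.
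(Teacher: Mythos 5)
Your proof is correct and follows essentially the same route as the paper: both are the same pigeonhole argument, observing that each summand $\left(P(\even \setminus A)\right)^8$ omits exactly $v(g)$ theta constants, so a set $Z$ of vanishing characteristics with $|Z| > v(g)$ cannot fit inside any $A$ and hence kills every term. Your formulation via $Z \subseteq A$ is just a slightly more explicit bookkeeping of the paper's own reasoning.
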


\begin{proof}
This is just a trivial application of the pigeon hole principle. Indeed, each summand of $F_H$ is the product of $\frac{1}{2}\binom{2g+2}{g+1}$ even theta constants out of $2^{g-1}(2^g+1)$ total even theta constants, since 
there are exactly $v(g)$ theta constants left out of the product. If there are more than $v(g)$ theta constants 
vanishing at $\tau$, then every summand contains at least one of them, hence it vanishes at $\tau$.
\end{proof}

We shall also apply the following auxiliary result, which is essentially Lemma 3 in \cite{Accola}. A self-contained proof is reproduced here for readers' convenience.

\begin{Lemma} \label{accola}
If a curve $C$ of genus $5$ with a base point free $g^1_3$ 
carries two half-canonical $g^1_4$, then $C$ is hyperelliptic.
\end{Lemma}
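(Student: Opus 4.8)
The plan is to use the base-point-free pencil trick together with Riemann--Roch, exploiting the theta-characteristic relation to show that each half-canonical $g^1_4$ differs from the given $g^1_3$ by a single point, and then to produce a $g^1_2$. Throughout I write $T$ for the base-point-free $g^1_3$ (so $h^0(T)=2$) and $L_1,L_2$ for the two distinct half-canonical $g^1_4$ (so $h^0(L_i)=2$ and $2L_i\sim K$, with $\deg K = 8$ and $g=5$).

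First I would analyze the multiplication maps $\mu_i\colon H^0(T)\otimes H^0(L_i)\to H^0(T+L_i)$. Since $T$ is base-point-free, the base-point-free pencil trick identifies $\ker\mu_i\cong H^0(L_i-T)$, a class of degree $1$. The crucial observation is that the half-canonical relation $2L_i\sim K$ gives $K-T-L_i\sim L_i-T$, so Riemann--Roch (with $\deg(T+L_i)=7$) yields $h^0(T+L_i)=3+h^0(L_i-T)$. If $L_i-T$ were not effective, then $\mu_i$ would be injective and hence $h^0(T+L_i)\geq\dim\bigl(H^0(T)\otimes H^0(L_i)\bigr)=4$, contradicting $h^0(T+L_i)=3$. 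Therefore $H^0(L_i-T)\neq 0$, i.e. $L_i\sim T+p_i$ for a (unique) point $p_i$.

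Next I would combine the two relations in $\Pic(C)$. From $2L_1\sim K\sim 2L_2$ and $L_i\sim T+p_i$ one gets $2T+2p_1\sim 2T+2p_2$, hence $2p_1\sim 2p_2$. Since $L_1\neq L_2$ forces $p_1\neq p_2$, the linear system $|2p_1|$ contains the two distinct divisors $2p_1$ and $2p_2$, so $h^0(2p_1)\geq 2$. This is a $g^1_2$, and therefore $C$ is hyperelliptic.

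The main obstacle is the first step: one must notice that the half-canonical hypothesis is exactly what makes the residual $K-T-L_i$ coincide with $L_i-T$, so that the injectivity coming from the pencil trick collides with the Riemann--Roch dimension count and forces $L_i\sim T+p_i$. After that, the remainder is a short manipulation of divisor classes. It is worth recording that the conclusion is consistent with the intended use of the lemma: a hyperelliptic curve carries no base-point-free $g^1_3$, so in effect the statement shows that the displayed configuration cannot occur on a non-hyperelliptic curve, which is precisely what is needed to rule out a common point of the four divisors in the genus $5$ covering.
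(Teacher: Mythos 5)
Your proof is correct, and it establishes the paper's key intermediate claim by a genuinely different method. Both arguments rest on the same structural fact: on a genus $5$ curve with a base-point-free $g^1_3$ (your $T$), every half-canonical $g^1_4$ is of the form $T+p$ for a single point $p$; and both finish identically, deducing $2p_1\sim 2p_2$ with $p_1\neq p_2$ and hence a $g^1_2$. The difference lies in how that claim is proved. The paper reproduces Accola's synthetic route: it picks a divisor $x+y+z$ of the $g^1_3$, notes that $h^0(K_C-x-y-z)=h^0(K_C-x-y)$ (so any canonical divisor through two of the three points contains the third), and exploits the fact that sums of two divisors of the half-canonical pencil are canonical divisors; this is an explicit divisor chase using nothing beyond Riemann--Roch for points, though its equality $h^0(K_C-x-y)=3$ silently uses non-hyperellipticity, which is legitimate only because a base-point-free $g^1_3$ excludes hyperelliptic curves. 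You instead run the base-point-free pencil trick on $\mu_i\colon H^0(T)\otimes H^0(L_i)\to H^0(T+L_i)$, and the half-canonical hypothesis enters exactly once, to identify the Serre-dual term $h^0(K_C-T-L_i)$ with the kernel $h^0(L_i-T)$, so that injectivity of $\mu_i$ collides with the Riemann--Roch count $h^0(T+L_i)=3+h^0(L_i-T)$. Your version is shorter and avoids the explicit divisor bookkeeping, at the cost of invoking the pencil trick; the paper's is more elementary and self-contained. Your closing observation --- that a base-point-free $g^1_3$ already precludes hyperellipticity, so the lemma really says the configuration is impossible --- is exactly how the lemma functions in the proof of the main theorem, where it shows that $\Theta_\text{null}\cap D_1\cap D_T$ meets $\mathcal{M}_5$ precisely in the hyperelliptic locus, which $F_H$ then avoids.
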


\begin{proof}
We claim (see \cite{Accola}, Lemma 2) that if $C$ is a curve of genus $5$ with a base point free
$g^1_3$, then every half-canonical $g^1_4$ has a fixed point. Indeed, let $x+y+z$ be a divisor 
in the $g^1_3$ with three distint points and notice that $h^0(C, K_C-x-y-z)= h^0(C,K_C-x-y) = 
h^0(C,K_C-x-z)$. If $D_x$ and $D_y$ are two divisors in the half-canonical $g^1_4$ containing $x$ 
and $y$, respectively, it follows that $z$ is contained in the canonical divisor $D_x+D_y$, say in 
$D_x$. Now, if $y$ is not a base point of the $g^1_4$, then there is a divisor $D$ in the $g^1_4$ 
which does not contain $y$. On the other hand, $y$ has to be contained in the canonical divisor 
$D_x+D$ containing $x$ and $z$, hence $D_x =x+y+z+w$ and $g^1_4 = g^1_3 + w$, as claimed.  
By the claim, both half-canonical $g^1_4$ have a fixed point, namely, the first one is $g^1_3 + x$ 
and the second one is $g^1_3 + y$ with $x \ne y$. We have $2g^1_3 + 2x = \vert K_C \vert = 
2 g^1_3 + 2y$, hence $2x \sim 2y$ with $x \ne y$ and $C$ turns out to be hyperelliptic. 
\end{proof}

\begin{proof}[Proof of Theorem \ref{main}]
For $g = 2,3$ the statement is trivial, as recalled in the Introduction, hence we need to check it only 
for $g = 4,5$. Let first $g = 4$. According to \eqref{M4}, our three open sets are the following:
	\[
		\M_4 = (\M_4\setminus \Theta_\text{null}) \cup (\M_4 \setminus D_1) \cup (\M_4\setminus D_H).
	\]
We show that the above divisors satisfy the hypotheses of Corollary \ref{affcor}. For $D_1$ and $\Theta_\text{null}$ this is obvious. Hence we just need to check that the closure $\tilde{D}_H$ of $D_H$ in $\A_4^*$ contains the product of periods of smooth curves and periods of nodal curves, and then apply Corollary \ref{affcor}. If $\tau \in \tilde{D}_H$ is a product of periods, i.e. $\tau = \left(\begin{smallmatrix}\tau_1 & 0 \\ 0 & \tau_2\end{smallmatrix}\right)$, with $\tau_1 \in \M_{g_1}$ and $\tau_2 \in \M_{g_2}$, 
$g_1 + g_2 = 4$, then by Remark~\ref{split} we have $\theta_m(\tau) = 0$ if $m = m_1\oplus m_2$ with $m_1 \in \F^{2g_1}$ and 
$m_2 \in \F^{2g_2}$ odd characteristics. There are two possible cases. If $g_1 = g_2 = 2$, then we have $6 \cdot 6 = 36 > 10 = v(4)$ even characteristics which split as odd$\oplus$odd in the notation of Remark \ref{split}; by Lemma \ref{crit}, $F_H$ vanishes on $\tau$. If $g_1 = 1$ and $g_2 = 3$, then we have $1\cdot 28 = 28 > 10 = v(4)$ even characteristics and again $F_H$ vanishes on $\tau$. The analogue result holds for nodal curves. Hence $\mathcal{M}_4 \setminus D_H$ is an affine open set. 
 Moreover, $\Theta_\text{null} \cap D_1$ is the hyperelliptic locus in $\mathcal{M}_4$. Indeed, set-theoretically
 \begin{equation}
 	\label{twotheta}
	 \Theta_\text{null} \cap D_1 = \bigcup_{m_1 \neq m_2} \{\theta_{m_1}=\theta_{m_2} = 0\}
 \end{equation}
and the intersection of this locus with $\M_4$ gives the hyperelliptic locus (see \cite{Igu:81}). By Lemma \ref{hyperavoid} we have $\mathcal{M}_4 \supset \Theta_\text{null} \cap D_1 \cap D_H = \emptyset$, hence \eqref{M4} holds.

Let now $g = 5$. According to \eqref{M5}, our four open sets are the following:
	\[
		\M_5 = ( \M_5 \setminus \Theta_\text{null}) \cup (\M_5 \setminus D_1) \cup (\M_5 \setminus D_H) \cup (\M_5 \setminus D_T).
	\]
Again, we check that all involved divisors satisfy the hypotheses of Corollary \ref{affcor}. For $D_1$ and $\Theta_\text{null}$ this is obvious. Next, we claim that the closure $\tilde{D}_H$ of $D_H$ in $\A_5^*$ contains the product of periods of smooth curves and periods of nodal curves. Indeed, it is sufficient to prove that if $\tau = \left(\begin{smallmatrix}\tau_1 & 0 \\ 0 & \tau_2\end{smallmatrix}\right) \in \M_{g_1}\times \M_{g_2}$, with $g_1 + g_2 = 5$, 
then more than $v(5) = 66$ theta constants vanish on $\tau$. If $g_1 = 1$ and $g_2 = 4$, then we have $1 \cdot 120 = 120 > 66$ 
even theta constants vanishing on $\tau$. If $g_1 = 2$ and $g_2 = 3$, then we have $6\cdot 28 = 168 > 66$ even theta constants 
vanishing on $\tau$. By Lemma \ref{crit}, $F_H$ vanish on every product of smooth curves. The analogue result holds for nodal curves. Hence $\mathcal{M}_4\setminus D_H$ is an affine open set. 
Finally, we obtain \eqref{twotheta} as before and together with Lemma \ref{accola} we conclude that $\Theta_\text{null} \cap D_1 \cap D_T$ is exactly the hyperelliptic locus. By Lemma \ref{hyperavoid} we have $\mathcal{M}_5 \supset \Theta_\text{null} \cap D_1 \cap D_T \cap D_H = \emptyset$, hence \eqref{M5} holds.
\end{proof}

\section{Aknowledgements}

We are grateful to Riccardo Salvati Manni for strongly stimulating our joint project, 
as well as to Enrico Arbarello and Gabriele Mondello for enlightening conversations on 
this research topic.

Our work has been partially supported by GNSAGA of INdAM and MIUR Cofin 2008 - 
Geometria delle variet\`{a} algebriche e dei loro spazi di moduli (Italy).

\hspace{0.5cm}

\noindent
Claudio Fontanari \newline
Dipartimento di Matematica \newline 
Universit\`a degli Studi di Trento \newline 
Via Sommarive 14 \newline 
38123 Trento, Italy. \newline
E-mail address: fontanar@science.unitn.it

\hspace{0.5cm}

\noindent
Stefano Pascolutti \newline
Dipartimento di Matematica ``Guido Castelnuovo'' \newline
Universit\`a di Roma ``La Sapienza'' \newline
P.le Aldo Moro 5 \newline
00185 Roma, Italy. \newline
E-mail address: pascoluttistefano@gmail.com 


\begin{thebibliography}{99} 

\bibitem{Accola} R. D. Accola: Some loci of Teichm\"uller space for genus five 
defined by vanishing theta nulls. Contributions to analysis (a collection of 
papers dedicated to Lipman Bers), 11--18. Academic Press, New York, 1974. 

\bibitem{ArbCor:98} E.~Arbarello and M.~Cornalba: Calculating cohomology
groups of mo\-du\-li spaces of curves via algebraic geometry. 
Inst. Hautes \'Etudes Sci. Publ. Math. 88 (1998), 97--127.

\bibitem{ArbCor:08} E.~Arbarello and M.~Cornalba: Divisors in the moduli space of curves. 
Pre-print arXiv:0810.5373, to appear in Surveys in Differential Geometry.

\bibitem{CorHar:88} M.~Cornalba and J.~Harris: Divisor classes associated to 
families of stable varieties, with applications to the moduli space of 
curves. Ann. Sc. Ec. Norm. Sup., 4~s., t.~21 (1988), 455--475.

\bibitem{Db} O. Debarre: Le lieu des vari\'et\'es ab\'eliennes dont le diviseur 
th\^eta est singulier a deux composantes, Ann. Sc. Ec. Norm. Sup. 25 (1992), 687--708.
 
\bibitem{FabLoo:99} C.~Faber and E.~Looijenga: Remarks on moduli of curves. 
Moduli of curves and abelian varieties. Aspects Math., E33, Vieweg (1999), 
23--45.  

\bibitem{FL} C. Fontanari and E. Looijenga: A perfect stratification of $\mgo$ for $g \le 5$.
Geom. Dedicata 136 (2008), 133--143. 

\bibitem{GruSal:09} S.~Grushevsky and R.~Salvati Manni: The superstring cosmological constant 
and the Schottky form in genus $5$. arXiv:0809.1391 (2008).

\bibitem{HaiLoo:98} R.~Hain and E.~Looijenga: Mapping class groups and 
moduli spaces of curves. Proc. Symp. Pure Math. 62, AMS (1998), 97--142.

\bibitem{Harer:86} J.~Harer: The virtual cohomological dimension of the 
mapping class group of an orientable surface.
Inv. Math. 84 (1986), 157--176.

\bibitem{Igu:64} J. Igusa: On the graded ring of theta constants. Am. J. Math. 89 (1967), 817--855.

\bibitem{Igu:81} J. Igusa: On the irreducibility of Schottky's divisor. J. Fac. Sci. Tokyo 28 (1981), 531--545.

\bibitem{Mum:84} D.~Mumford: Tata Lectures on Theta II. Progress in Mathematics, Vol. 43, 
Birkh\"auser, Boston/Basel/Stuttgart, 1984.

\bibitem{Salvati} R. Salvati Manni: Slope of cusp forms and theta series. 
J. Number Theory 83 (2000), 282--296.

\bibitem{Tsu:90} S.~Tsuyumine: Thetanullwerte on a moduli space of curves and hyperelliptic loci. Math. Z. 207 (1991), 539--568.

\end{thebibliography}
\end{document}